\newtheorem{theorem}{Theorem}[section]
\newtheorem{lemma}[theorem]{Lemma}
\numberwithin{figure}{section}
\theoremstyle{definition}
\newtheorem{definition}[theorem]{Definition}
\theoremstyle{remark}
\numberwithin{equation}{section}
\newcommand{\Om} {\Omega}
\newcommand\R{\mathbb R}
\begin{document}

\title[Mixed local and nonlocal Dirichlet $(p,q)$-eigenvalue problem]
{Mixed local and nonlocal Dirichlet $(p,q)$-eigenvalue problem}

\author{Prashanta Garain and Alexander Ukhlov}

\begin{abstract}
In this article, we consider the spectral problem for the mixed local and nonlocal $p$-Laplace operator. We discuss the existence and regularity of eigenfunction of the associated Dirichlet $(p,q)$-eigenvalue problem in a bounded domain $\Omega\subset\mathbb{R}^N$ under the assumption that $1<p<\infty$ and $1<q<p^{*}$ where $p^{*}=\frac{N p}{N-p}$ if $1<p<N$ and $p^{*}=\infty$ if $p\geq N$.
\end{abstract}
\maketitle
\footnotetext{\textbf{Key words and phrases:} $p$-Laplacian, Dirichlet eigenvalue problem.}
\footnotetext{\textbf{2010
Mathematics Subject Classification:} 35M12, 35J92, 35R11, 35P30, 35A01.}

\section{Introduction }
In this article, we study the following mixed local and nonlocal $(p,q)$-eigenvalue problem with the Dirichlet boundary condition:
\begin{equation}\label{meqn}
-\Delta_p u+(-\Delta_p)^s u=\lambda\|u\|_{L^q(\Om)}^{p-q}|u|^{q-2}u\text{ in }\Om,\quad u=0\text{ in }\mathbb{R}^N\setminus\Om,
\end{equation}
where $\Om\subset\mathbb{R}^N$ is a bounded domain. We assume that $1<p<\infty$, $1<q<p^*$, where $p^*=\frac{N p}{N-p}$ if $1<p<N$ and $p^*=\infty$ if $p\geq N$. Here
$
\Delta_{p} u=\text{div}(|\nabla u|^{p-2}\nabla u)
$
is the $p$-Laplace operator and for $0<s<1$  
$$
(-\Delta_p)^s u=\text{P.V.}\int_{\mathbb{R}^N}\frac{|u(x)-u(y)|^{p-2}(u(x)-u(y))}{|x-y|^{N+ps}}\,dy,
$$
is the fractional $p$-Laplace operator, where P.V. denotes the principal value, see \cite{DPV} for more details.

Eigenvalue problems have received considerable attention over the past decades due to its strong relation with bifurcation theory, resonance problems, spectral optimization problems and also with applied sciences, such as fluid and quantum mechanics, see \cite{KLin1, Anle, Lind92, RSD}, and the references therein for further details.

In the local case, the $p$-Laplace eigenvalue problem
$$
-\Delta_p u=\lambda\|u\|^{p-q}_{L^q(\Om)}|u|^{q-2}u\text{ in }\Om,\quad u=0\text{ on }\partial\Om,
$$
has been intensively studied. In this concern, for $p=q$, we refer to the works \cite{GP,Lind92} and the references therein. The case $p\neq q$, was considered, for example, in \cite{Drabek99,DKN,ET1,Ercole,FL,GP,IO,K2,KLin1,Naz1,Otani1,Otani2}.

In the nonlocal case, the following fractional $p$-Laplace eigenvalue problem
$$
(-\Delta_p)^s u=\lambda|u|^{p-2}u\text{ in }\Om,\quad u=0\text{ in }\mathbb{R}^N\setminus\Om.
$$
was studied in \cite{FP14,LL14}. We also refer to \cite{BPari,DRSS} and the references therein.

In the mixed local and nonlocal case, in \cite{DFR,GS} the following eigenvalue problem has been addressed
\begin{equation*}
\begin{split}
-\Delta_p u-\int_{\mathbb{R}^N}
\mathcal{J}(x-y)|u(x)-u(y)|^{p-2}(u(y)-u(x))dy &=\lambda |u|^{p-2}u\text{ in }\Omega,\\
u &= 0 \text{ in } \mathbb{R}^N\setminus\Omega,
\end{split}
\end{equation*}
where $\mathcal{J}:\mathbb{R}^N\to\mathbb{R}^{+}$ is a nonsingular, radially symmetric, nonnegative and compactly supported
kernel. Further, authors  in \cite{BDD, DS} studied the limiting problem for mixed local and nonlocal problems.

Although, as far as we are aware, $(p,q)$-eigenvalue problems involving the mixed local and
nonlocal operator have been far less studied in the literature. In this article, we are mainly interested in the problem \eqref{meqn}. To be more precise, we investigate the existence and regularity properties of eigenfunctions of \eqref{meqn}. To this end, we follow the approach introduced in \cite{Ercole}.

The organization of the paper is as follows: In Section 2, we present the functional setting, state some auxiliary results and the main results of this article. In Section 3, some preliminary results are proved. Finally, in Section 4, we prove the main results.

\section{Functional Setting and main results}
In this section, we present some known results for the Sobolev spaces, see \cite{DPV} for more details.
Let 
$E\subset \mathbb{R}^N$
be a measurable set and $|E|$ denote its Lebesgue measure. Recall that the Lebesgue space 
$L^{p}(E),1<p<\infty$, 
is defined as the space of $p$-integrable functions $u:E\to\mathbb{R}$ with the norm
$$ \|u\|_{L^p(E)}=
\left(\int_{E}|u(x)|^p~dx\right)^{\frac1p}.
$$
Here and in the rest of the paper, let $\Om\subset\mathbb{R}^N$ with $N\geq 2$ be a bounded domain. 

The Sobolev space $W^{1,p}(\Omega)$, $1<p<\infty$, (see, for example, \cite{M}) is defined 
as the Banach space of locally integrable weakly differentiable functions
$u:\Omega\to\mathbb{R}$ equipped with the norm
\[
\|u\|_{W^{1,p}(\Omega)}=\| u\|_{L^p(\Omega)}+\|\nabla u\|_{L^p(\Omega)}.
\]
The space $W^{1,p}(\mathbb{R}^N)$ is defined analogously.
The Sobolev space $W^{1,p}_0(\Omega)$ is defined as the closure of $C_c^{\infty}(\Omega)$ with respect to the norm  $\|u\|=\|\nabla u\|_{L^p(\Om)}$.

The fractional Sobolev space $W^{s,p}(\Omega)$, $0<s<1<p<\infty$, is defined by
$$
W^{s,p}(\Omega)=\Big\{u\in L^p(\Omega):\frac{|u(x)-u(y)|}{|x-y|^{\frac{N}{p}+s}}\in L^p(\Omega\times \Omega)\Big\}
$$
and endowed with the norm
$$
\|u\|_{W^{s,p}(\Omega)}=\left(\int_{\Omega}|u(x)|^p\,dx+\int_{\Omega}\int_{\Omega}\frac{|u(x)-u(y)|^p}{|x-y|^{N+ps}}\,dx\,dy\right)^\frac{1}{p}.
$$

To study mixed problems, we consider the space $X$ defined as
\[
X=\{u\in W^{1,p}(\mathbb{R}^N):u|_{\Omega}\in W_0^{1,p}(\Omega), u=0\text{ a.e. in }\mathbb{R}^N\setminus\Om\}
\]
under the norm 
$$
\|u\|=\|\nabla u\|_{L^p(\Om)}+\left\|\frac{u(x)-u(y)}{|x-y|^\frac{N+ps}{p}}\right\|_{L^p(\mathbb{R}^N\times\mathbb{R}^N)}.
$$

Next, we have the following result from \cite[Lemma $2.1$]{BDD}.
\begin{lemma}\label{locnon1}
There exists a constant $c=c(N,p,s,\Omega)$ such that
\begin{equation}\label{locnonsem}
\int_{\mathbb{R}^N}\int_{\mathbb{R}^N}\frac{|u(x)-u(y)|^p}{|x-y|^{N+ps}}\,dx\,dy\leq c\int_{\Omega}|\nabla u|^p\,dx
\end{equation}
for every $u\in X$.
\end{lemma}

For the following result, see \cite{BDVV2, Biagi1, SV}.
\begin{lemma}
\label{Xuthm}
The space $X$ is a real  separable and uniformly convex Banach space.
\end{lemma}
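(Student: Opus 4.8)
The plan is to transfer all three properties from the classical space $W_0^{1,p}(\Omega)$, using Lemma~\ref{locnon1} to exploit the fact that the two terms of the $X$-norm are not independent. First I would record that $\|\cdot\|$ is genuinely a norm: if $\|u\|=0$ then $\nabla u=0$ a.e. on $\Omega$ and $u\in W_0^{1,p}(\Omega)$, so Poincar\'e's inequality forces $u=0$ on $\Omega$, and since $u=0$ on $\mathbb{R}^N\setminus\Omega$ we get $u\equiv0$; homogeneity and the triangle inequality are inherited from the two $L^p$-seminorms. By Lemma~\ref{locnon1} the Gagliardo term is dominated by the gradient term, so
\[
\|\nabla u\|_{L^p(\Omega)}\le\|u\|\le\big(1+c^{1/p}\big)\|\nabla u\|_{L^p(\Omega)},
\]
that is, $\|\cdot\|$ is equivalent to the standard norm $\|\nabla\cdot\|_{L^p(\Omega)}$ of $W_0^{1,p}(\Omega)$. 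The restriction $u\mapsto u|_\Omega$ is a linear bijection of $X$ onto $W_0^{1,p}(\Omega)$ (injective because $u$ vanishes outside $\Omega$, surjective because the zero-extension of a $W_0^{1,p}(\Omega)$-function lies in $W^{1,p}(\mathbb{R}^N)$ and has finite Gagliardo energy by Lemma~\ref{locnon1}), and the displayed inequalities make it a linear homeomorphism. Since $W_0^{1,p}(\Omega)$ is a separable Banach space for $1<p<\infty$, completeness and separability pass to $X$ at once.

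For uniform convexity the natural device is the linear map $\iota(u)=(\nabla u,\,D_su)$, where $D_su(x,y)=\frac{u(x)-u(y)}{|x-y|^{(N+ps)/p}}$, from $X$ into the product $L^p(\Omega;\mathbb{R}^N)\times L^p(\mathbb{R}^N\times\mathbb{R}^N)$. On this product the combination $\|(F,G)\|_*=(\|F\|_{L^p}^p+\|G\|_{L^p}^p)^{1/p}$ is a genuine $L^p$-type norm, hence uniformly convex by Clarkson's inequalities, and $\iota$ is an isometry onto a closed subspace, so $(X,\|\cdot\|_*)$ is uniformly convex, with $\|\cdot\|_*$ equivalent to $\|\cdot\|$. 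The only real issue is that the stated norm is the sum $p_1(u)+p_2(u)$, with $p_1(u)=\|\nabla u\|_{L^p(\Omega)}$ and $p_2(u)=[u]_{s,p}$, rather than the $\ell^p$-combination, and uniform convexity is not preserved under passage to an equivalent norm; so one must argue for the sum directly.

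I would verify uniform convexity of $\|\cdot\|$ through the sequential criterion. Take $\|u_n\|=\|v_n\|=1$ with $\|\tfrac{u_n+v_n}{2}\|\to1$ and aim to prove $\|u_n-v_n\|\to0$; by the equivalence above it suffices to show $\|\nabla(u_n-v_n)\|_{L^p(\Omega)}\to0$. Since $p_i(\tfrac{u_n+v_n}{2})\le\tfrac12\big(p_i(u_n)+p_i(v_n)\big)$ and the two upper bounds sum to $1$, both convexity gaps tend to $0$; in particular $\|\nabla u_n\|_{L^p(\Omega)}$ and $\|\nabla v_n\|_{L^p(\Omega)}$ are bounded, and Lemma~\ref{locnon1} gives $\|\nabla u_n\|_{L^p(\Omega)},\|\nabla v_n\|_{L^p(\Omega)}\ge(1+c^{1/p})^{-1}>0$. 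Passing to a subsequence (it suffices to treat an arbitrary one) so that these norms converge to $a,b\in(0,1]$, the vanishing of the gradient gap reads $\|\nabla u_n+\nabla v_n\|_{L^p(\Omega)}\to a+b$; by uniform convexity of $L^p$ (in the form: if $\|x_n\|\to a>0$, $\|y_n\|\to b>0$ and $\|x_n+y_n\|\to a+b$, then $x_n/\|x_n\|-y_n/\|y_n\|\to0$) the normalized gradients converge together, whence $\nabla(u_n-\tau v_n)\to0$ in $L^p(\Omega)$ with $\tau=a/b$.

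The coupling supplied by Lemma~\ref{locnon1} now closes the argument, and this is the crux. From $\nabla(u_n-\tau v_n)\to0$ and Poincar\'e we get $u_n-\tau v_n\to0$ in $W_0^{1,p}(\Omega)$, and then Lemma~\ref{locnon1} yields $[u_n-\tau v_n]_{s,p}\to0$; consequently $p_2(u_n)-\tau\,p_2(v_n)\to0$, so the nonlocal scales obey the same ratio $\tau$ as the gradient scales. Feeding this into the normalizations $p_1(u_n)+p_2(u_n)=1=p_1(v_n)+p_2(v_n)$ forces $1=\tau\cdot1$, i.e. $\tau=1$; hence $u_n-v_n\to0$ in $W_0^{1,p}(\Omega)$ and a fortiori in $X$, proving uniform convexity. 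The main obstacle is exactly that the two seminorms may a priori live at different scales, so Clarkson applied to either factor alone cannot control $u_n-v_n$; Lemma~\ref{locnon1}, by making the Gagliardo seminorm a continuous function of the gradient data, rigidly links the two factors and pins the alignment ratio to $1$.
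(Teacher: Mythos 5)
Your argument is correct, and it takes a genuinely different route from the paper's treatment: the paper gives no proof of Lemma \ref{Xuthm} at all, deferring entirely to the references \cite{BDVV2, Biagi1, SV}. Your completeness/separability step (transfer along the restriction map to $W_0^{1,p}(\Omega)$, using Lemma \ref{locnon1} for the two-sided norm comparison) is routine, but your handling of uniform convexity addresses a real subtlety that the bare citation glosses over. The norm this paper actually puts on $X$ is the sum $\|\nabla u\|_{L^p(\Omega)}+[u]_{s,p}$, and, as you rightly insist, uniform convexity is not inherited under equivalent renorming: an $\ell^1$-type sum of two independent $L^p$-seminorms is in general not even strictly convex (compare $|x|+|y|$ on $\mathbb{R}^2$), whereas the route usually taken in the literature is to equip the space with the $\ell^p$-combination $\left(\|\nabla u\|_{L^p(\Omega)}^p+[u]_{s,p}^p\right)^{1/p}$, for which Clarkson's inequalities apply directly via the isometric embedding $u\mapsto\bigl(\nabla u,\,(u(x)-u(y))/|x-y|^{(N+ps)/p}\bigr)$ into a product of $L^p$ spaces. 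Your sequential argument for the sum norm is sound at every step: the reduction to subsequences, the lower bound $p_1(u_n)\geq(1+c^{1/p})^{-1}$ from Lemma \ref{locnon1}, the standard fact that asymptotic saturation of the triangle inequality in a uniformly convex space forces the normalized vectors together, and then the crucial coupling, where Lemma \ref{locnon1} transfers $\nabla(u_n-\tau v_n)\to0$ into $[u_n-\tau v_n]_{s,p}\to0$ so that both seminorms obey the same ratio $\tau$, which the normalization then pins to $1$. What the paper's citation buys is brevity; what your proof buys is a self-contained verification for the norm as literally defined in this paper, which is the form of the statement that the rest of the article (notably the inverse-iteration scheme in Theorem \ref{newthm}, whose proof invokes uniform convexity of $X$) actually uses.
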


Next, we state the following result, which follows from \cite[Theorem $9.14$]{var}.
\begin{theorem}\label{MB}
Let $V$ be a real separable reflexive Banach space and $V^*$ be the dual of $V$. Assume that $A:V\to V^{*}$ is a bounded, continuous, coercive and monotone operator. Then $A$ is surjective, i.e., given any $f\in V^{*}$, there exists $u\in V$ such that $A(u)=f$. If $A$ is strictly monotone, then $A$ is also injective. 
\end{theorem}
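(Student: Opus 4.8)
The plan is to prove surjectivity by the classical Galerkin approximation scheme combined with Minty's monotonicity device: one first solves $A(u)=f$ in finite-dimensional subspaces, extracts a weak limit using reflexivity, and then recovers the equation in the limit using monotonicity. Since $V$ is separable, I would fix an increasing chain of finite-dimensional subspaces $V_1\subset V_2\subset\cdots$ with $\bigcup_n V_n$ dense in $V$, and choose a basis $e_1,\dots,e_n$ of each $V_n$. For fixed $n$, I seek $u_n=\sum_{i=1}^n c_i e_i$ satisfying the Galerkin equations $\langle A(u_n)-f,e_j\rangle=0$ for $j=1,\dots,n$. Identifying $V_n$ with $\mathbb{R}^n$ via $c=(c_1,\dots,c_n)$, this is the system $P(c)=0$ for the map $P_j(c)=\langle A(u_n)-f,e_j\rangle$, which is continuous because $A$ is continuous. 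A direct computation gives $\langle P(c),c\rangle_{\mathbb{R}^n}=\langle A(u_n)-f,u_n\rangle$, and coercivity together with boundedness of $f$ yields $\langle A(u_n)-f,u_n\rangle>0$ once $\|u_n\|=\rho$ is large enough, with $\rho$ independent of $n$. The standard consequence of Brouwer's fixed point theorem (a continuous field pointing outward on a sphere vanishes somewhere inside) then produces a Galerkin solution $u_n$ with $\|u_n\|\le\rho$.

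Because $\rho$ is uniform in $n$, the sequence $(u_n)$ is bounded in $V$, so reflexivity gives a subsequence (not relabeled) with $u_n\rightharpoonup u$ in $V$; boundedness of $A$ makes $(A(u_n))$ bounded in $V^*$, and reflexivity again yields $A(u_n)\rightharpoonup\chi$ in $V^*$ along a further subsequence. Testing the Galerkin identity against a fixed $v\in V_m$ and letting $n\to\infty$ shows $\langle\chi-f,v\rangle=0$ on the dense set $\bigcup_m V_m$, hence $\chi=f$. Moreover, testing with $v=u_n$ gives $\langle A(u_n),u_n\rangle=\langle f,u_n\rangle\to\langle f,u\rangle$, since $u_n\rightharpoonup u$.

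The main obstacle is that $\chi=f$ need not equal $A(u)$, because $A$ is not assumed weakly continuous; this is resolved by Minty's trick. Monotonicity gives $\langle A(u_n)-A(v),u_n-v\rangle\ge0$ for every $v\in V$, and passing to the limit, using $\langle A(u_n),u_n\rangle\to\langle f,u\rangle$ together with $A(u_n)\rightharpoonup f$ and $u_n\rightharpoonup u$, yields $\langle f-A(v),u-v\rangle\ge0$ for all $v\in V$. Substituting $v=u-tw$ with $t>0$, dividing by $t$, and letting $t\to0^+$ while invoking continuity of $A$ along the segment $t\mapsto A(u-tw)$ gives $\langle f-A(u),w\rangle\ge0$ for every $w\in V$; replacing $w$ by $-w$ forces $A(u)=f$, which proves surjectivity.

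Finally, injectivity under strict monotonicity is immediate: if $A(u)=A(v)=f$, then $\langle A(u)-A(v),u-v\rangle=0$, which by strict monotonicity forces $u=v$. I expect the only genuinely delicate point to be the limit passage in the Minty step, where one must combine weak convergence of $A(u_n)$ in $V^*$ with weak convergence of $u_n$ in $V$ and the convergence of the pairing $\langle A(u_n),u_n\rangle$; the remaining steps are standard once the uniform bound and the separability-based choice of $\{V_n\}$ are in place.
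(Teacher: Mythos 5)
Your proof is correct, and it is the classical Galerkin--Minty (Browder--Minty) argument: finite-dimensional approximation via separability, a Brouwer-type zero lemma producing Galerkin solutions with a coercivity-uniform bound, weak compactness from reflexivity and the boundedness of $A$, identification of the weak limit of $A(u_n)$ through the dense union of the subspaces $V_n$, and finally Minty's trick to upgrade $\langle f-A(v),u-v\rangle\ge0$ to $A(u)=f$; injectivity under strict monotonicity is, as you say, immediate. Note, however, that the paper does not prove this statement at all: it is quoted as following from \cite[Theorem $9.14$]{var} (the Browder--Minty surjectivity theorem) and is then used as a black box only to establish property $(H_5)$ in Lemma \ref{auxlmab}. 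So your proposal differs from the paper in that it supplies the proof the paper outsources, and the argument you give is essentially the one underlying the cited theorem. What your version buys is self-containedness, and it also makes visible that only hemicontinuity (continuity of $t\mapsto\langle A(u+tw),v\rangle$ along segments) is really needed in the Minty step, not full norm continuity; what the citation buys is brevity. One point worth tightening in your write-up is the Brouwer step: the set $\{c:\|\sum_i c_ie_i\|_V=\rho\}$ is not a Euclidean sphere in the coordinates $c$, so you should either apply the zero lemma on the boundary of this convex body, or run it on a Euclidean ball (using equivalence of norms on $V_n$) and then recover the uniform bound $\|u_n\|_V\le\rho$ directly from coercivity and the Galerkin identity $\langle A(u_n),u_n\rangle=\langle f,u_n\rangle$, which is how the bound is usually obtained.
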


The next result follows from \cite[Corollary $1.57$]{Maly} (see, also, \cite{M}).
\begin{lemma}\label{embd}
Let $\Omega\subset\mathbb{R}^N$ be such that $|\Omega|<\infty$ and $1<p<\infty$, $1<r<p^{*}$. Then for every $u\in W_0^{1,p}(\Omega)$, there exists a positive constant $C=C(r,p,N)$ such that
\begin{equation}\label{embeqn}
\left(\int_{\Omega}|u|^r\,dx\right)^\frac{1}{r}\leq C|\Omega|^{\frac{1}{r}-\frac{1}{p}+\frac{1}{N}}\left(\int_{\Omega}|\nabla u|^p\,dx\right)^\frac{1}{p}.
\end{equation}
\end{lemma}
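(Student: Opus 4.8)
The plan is to deduce \eqref{embeqn} from the classical Gagliardo--Nirenberg--Sobolev inequality together with H\"older's inequality, using crucially that $|\Omega|<\infty$ to interpolate between $L^r$ and the critical Lebesgue space. I would split into the subcritical case $1<p<N$ and the case $p\ge N$.

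First I would treat $1<p<N$. Since $u\in W_0^{1,p}(\Omega)$, extending $u$ by zero yields a compactly supported function in $W^{1,p}(\R^N)$, so the Sobolev inequality on $\R^N$ applies with a constant depending only on $N$ and $p$, in particular independent of $\Omega$:
$$
\left(\int_\Omega |u|^{p^*}\,dx\right)^{1/p^*}\le C(N,p)\left(\int_\Omega|\nabla u|^p\,dx\right)^{1/p}.
$$
Because $1<r<p^*$ and $|\Omega|<\infty$, H\"older's inequality with the conjugate exponents $\tfrac{p^*}{r}$ and $\tfrac{p^*}{p^*-r}$ gives
$$
\left(\int_\Omega|u|^r\,dx\right)^{1/r}\le |\Omega|^{\frac1r-\frac1{p^*}}\left(\int_\Omega|u|^{p^*}\,dx\right)^{1/p^*}.
$$
Combining the two displays and using the identity $\tfrac1{p^*}=\tfrac1p-\tfrac1N$, so that $\tfrac1r-\tfrac1{p^*}=\tfrac1r-\tfrac1p+\tfrac1N$, yields \eqref{embeqn}.

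For the case $p\ge N$ (where $p^*=\infty$, so $r\in(1,\infty)$ is arbitrary), I would reduce to the subcritical case. Fix $\tilde p\in(1,N)$ close enough to $N$ that $r<\tilde p^{*}=\tfrac{N\tilde p}{N-\tilde p}$; this is possible since $\tilde p^{*}\to\infty$ as $\tilde p\to N^-$, and $\tilde p<N\le p$ holds automatically. H\"older's inequality applied to the gradient gives $\|\nabla u\|_{L^{\tilde p}(\Omega)}\le |\Omega|^{\frac1{\tilde p}-\frac1p}\|\nabla u\|_{L^{p}(\Omega)}$, so in particular $u\in W_0^{1,\tilde p}(\Omega)$ and the subcritical estimate applies with exponent $\tilde p$:
$$
\left(\int_\Omega|u|^r\,dx\right)^{1/r}\le C|\Omega|^{\frac1r-\frac1{\tilde p}+\frac1N}\,\|\nabla u\|_{L^{\tilde p}(\Omega)}\le C|\Omega|^{\frac1r-\frac1{\tilde p}+\frac1N+\frac1{\tilde p}-\frac1p}\,\|\nabla u\|_{L^{p}(\Omega)}.
$$
The powers of $|\Omega|$ telescope to $\tfrac1r-\tfrac1p+\tfrac1N$, again giving \eqref{embeqn}.

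The content here is entirely classical: the only genuinely nontrivial ingredient is the Sobolev inequality with an $\Omega$-independent constant, which I would simply quote (this is the substance of the cited \cite[Corollary $1.57$]{Maly}). The sole point requiring care is the exponent bookkeeping --- one must verify that the $|\Omega|$-powers combine to exactly $\tfrac1r-\tfrac1p+\tfrac1N$ in both cases, and confirm that a suitable $\tilde p<N$ can always be selected in the reduction step. I expect no real obstacle beyond this arithmetic.
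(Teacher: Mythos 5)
Your proposal is correct, but it is worth noting that the paper does not actually prove this lemma at all: it is stated as a known result, imported verbatim by citation to \cite[Corollary $1.57$]{Maly}. So where the paper supplies a reference, you supply a genuine derivation, and the derivation is sound. Your split into the subcritical case ($1<p<N$: Gagliardo--Nirenberg--Sobolev on $\mathbb{R}^N$ with $\Omega$-independent constant, followed by H\"older interpolation between $L^r$ and $L^{p^*}$, using $\frac1r-\frac1{p^*}=\frac1r-\frac1p+\frac1N$) and the case $p\ge N$ (reduction to a subcritical exponent $\tilde p<N$ chosen so that $r<\tilde p^*$, followed by H\"older on the gradient, with the $|\Omega|$-powers telescoping to $\frac1r-\frac1p+\frac1N$) is the standard way to obtain exactly this statement, and your exponent bookkeeping is right in both cases; the final constant depends only on $r,p,N$ since $\tilde p$ can be chosen as a function of $r$ and $N$ alone. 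One small imprecision: the lemma assumes only $|\Omega|<\infty$, not boundedness, so the extension of $u$ by zero need not be compactly supported; this is harmless, since the Sobolev inequality for $u\in W_0^{1,p}(\Omega)$ follows by applying it to an approximating sequence in $C_c^\infty(\Omega)\subset C_c^\infty(\mathbb{R}^N)$ and passing to the limit, which is presumably what you intend. In short: your argument would let the paper replace an external citation with a short self-contained proof, at the modest cost of half a page.
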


Next we define the notion of solution of the problem \eqref{meqn}.
\begin{definition}\label{def}
We say that $(\lambda,u)\in \mathbb{R}\times X\setminus\{0\}$ is an eigenpair of \eqref{meqn} if for every $\phi\in X$, we have
\begin{equation}\label{mwksol}
\begin{split}
&\int_{\Omega}|\nabla u|^{p-2}\nabla u\nabla\phi\,dx+\int_{\R^N}\int_{\mathbb{R}^N}\frac{|u(x)-u(y)|^{p-2}(u(x)-u(y))(\phi(x)-\phi(y))}{|x-y|^{N+ps}}\,dx dy\\
&=\lambda\|u\|_{L^q(\Om)}^{p-q}\int_{\Omega}|u|^{q-2}u\phi\,dx.
\end{split}
\end{equation}
\end{definition}
We observe that Lemma \ref{locnon1} ensures the above Definition in \eqref{mwksol} is well stated. We refer to $\lambda$ as an eigenvalue and $u$ as an eigenfunction of \eqref{meqn} corresponding to the eigenvalue $\lambda$.

\subsection{Main results}
Our main results in this article reads as follows:

\begin{theorem}\label{newthm}
Let $0<s<1$, $1<p<\infty$ and $1<q<p^{*}$. Then the following properties hold:
\vskip 0.2cm
\noindent
$(a)$ There exists a sequence $\{w_n\}_{n\in\mathbb{N}}\subset X\cap L^q(\Omega)$ such that $\|w_n\|_{L^q(\Omega)}=1$ and for every $v\in X$, we have 
\begin{multline}\label{its}
\int_{\Om}|\nabla w_n|^{p-2}\nabla w_n\nabla v\,dx\\
+\int_{\R^N}\int_{\R^N}\frac{|w_n(x)-w_n(y)|^{p-2}(w_n(x)-w_n(y))(v(x)-v(y))}{|x-y|^{N+ps}}\,dx dy\\
=\mu_n\int_{\Omega}|w_{n}|^{q-2}w_{n}v\,dx,
\end{multline}
where
\begin{multline*}\label{subopmin}
\mu_n\geq \lambda
:= \\
\inf\left\{\int_{\Om}|\nabla u|^p\,dx+\int_{\R^N}\int_{\R^N}\frac{|u(x)-u(y)|^{p}}{|x-y|^{N+ps}}\,dx dy:u\in X\cap {L^q(\Omega)},\,\|u\|_{L^q(\Omega)}=1\right\}.
\end{multline*}
\vskip 0.2cm
\noindent
(b) Moreover, the sequences $\{\mu_n\}_{n\in\mathbb{N}}$ and $\{\|w_{n+1}\|_{X}^{p}\}_{n\in\mathbb{N}}$ given by \eqref{its} are nonincreasing and converge to the same limit $\mu$, which is bounded below by $\lambda$. Further, there exists a subsequence $\{n_j\}_{j\in\mathbb{N}}$ such that both $\{w_{n_j}\}_{j\in\mathbb{N}}$ and $\{w_{n_{j+1}}\}_{j\in\mathbb{N}}$ converges in $X$ to the same limit $w\in X\cap L^q(\Omega)$ with $\|w\|_{L^q(\Omega)}=1$ and $(\mu,w)$ is an eigenpair of \eqref{meqn}.
\end{theorem}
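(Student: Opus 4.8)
The plan is to construct the sequence in (a) by the inverse iteration of \cite{Ercole}, transplanted to the mixed operator. Write $\mathcal{E}(u)=\int_{\Om}|\nabla u|^p\,dx+\int_{\R^N}\int_{\R^N}\frac{|u(x)-u(y)|^p}{|x-y|^{N+ps}}\,dx\,dy$ for the energy appearing in the definition of $\lambda$ and in (b), and let $\mathcal{A}\colon X\to X^{*}$ be the operator whose action $\langle\mathcal{A}(u),v\rangle$ is the left-hand side of \eqref{mwksol}, so that $\langle\mathcal{A}(u),u\rangle=\mathcal{E}(u)$ and $\mathcal{A}$ is the G\^ateaux derivative of the convex functional $\Phi=\frac1p\mathcal{E}$. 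The standard pointwise monotonicity inequalities for the local and fractional $p$-Laplacian show that $\mathcal{A}$ is bounded, continuous, coercive and strictly monotone, and it is positively $(p-1)$-homogeneous. By Lemma \ref{embd} and Lemma \ref{locnon1} the embedding $X\hookrightarrow L^q(\Om)$ is continuous and compact (this is where $q<p^{*}$ enters), so for $w\in X\cap L^q(\Om)$ the map $v\mapsto\int_{\Om}|w|^{q-2}wv\,dx$ lies in $X^{*}$; Theorem \ref{MB} then yields a unique $z\in X$ with $\mathcal{A}(z)=|w|^{q-2}w$.

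Starting from any $w_1\in X\cap L^q(\Om)$ with $\|w_1\|_{L^q(\Om)}=1$, I would define $w_{n+1}$ by solving $\mathcal{A}(z)=|w_n|^{q-2}w_n$ and normalising, $w_{n+1}=z/\|z\|_{L^q(\Om)}$. By the $(p-1)$-homogeneity of $\mathcal{A}$ this $w_{n+1}$ satisfies the identity displayed in (a) with $\mu_{n+1}=\|z\|_{L^q(\Om)}^{1-p}>0$ and $\|w_{n+1}\|_{L^q(\Om)}=1$. Testing that identity with $v=w_{n+1}$ and using H\"older's inequality gives $\mathcal{E}(w_{n+1})\le\mu_{n+1}$, while testing with $v=w_n$ gives $\langle\mathcal{A}(w_{n+1}),w_n\rangle=\mu_{n+1}$. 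Since each $w_n$ is admissible in the variational problem defining $\lambda$, we obtain $\mu_n\ge\mathcal{E}(w_n)\ge\lambda$, which yields the lower bound in (a); that $\lambda>0$ follows from Lemma \ref{embd}.

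For (b) the decisive step is the monotonicity. Inserting the two identities above into the convexity inequality $\Phi(w_n)\ge\Phi(w_{n+1})+\langle\mathcal{A}(w_{n+1}),w_n-w_{n+1}\rangle$ and using $\mathcal{E}(w_{n+1})\le\mu_{n+1}$ yields $\mathcal{E}(w_n)\ge\mu_{n+1}$, and together with $\mathcal{E}(w_n)\le\mu_n$ this produces the chain $\mu_{n+1}\le\mathcal{E}(w_n)\le\mu_n$. Hence $\{\mu_n\}$ and $\{\mathcal{E}(w_{n+1})\}$ are nonincreasing, bounded below by $\lambda>0$, and converge to a common limit $\mu\ge\lambda$. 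The bound $\mathcal{E}(w_n)\le\mu_1$ makes $\{w_n\}$ bounded in $X$, so by the reflexivity from Lemma \ref{Xuthm} a subsequence satisfies $w_{n_j}\rightharpoonup w$ in $X$, and by the compact embedding $w_{n_j}\to w$ in $L^q(\Om)$, so that $\|w\|_{L^q(\Om)}=1$ and $w\neq 0$.

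The main obstacle is to promote this weak convergence to strong convergence in $X$, which is needed in order to pass to the limit in the nonlinear operator $\mathcal{A}$. I would do this through the $(S_{+})$-property of $\mathcal{A}$: since the datum on the right-hand side of the identity is bounded in $L^{q'}(\Om)$ while $w_{n_j}-w\to 0$ in $L^q(\Om)$, H\"older's inequality forces $\langle\mathcal{A}(w_{n_j}),w_{n_j}-w\rangle\to 0$, whence $w_{n_j}\to w$ strongly in $X$; the same reasoning applied to $\{w_{n_j+1}\}$ yields a strong limit $\bar w$. Passing to the limit in the identity gives $\mathcal{A}(\bar w)=\mu\,|w|^{q-2}w$, and since $\mathcal{E}(\bar w)=\mathcal{E}(w)=\mu$ the test with $v=\bar w$ produces equality in H\"older's inequality, forcing $\bar w=w$. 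Therefore $\mathcal{A}(w)=\mu\,|w|^{q-2}w$ with $\|w\|_{L^q(\Om)}=1$, i.e. $(\mu,w)$ is an eigenpair of \eqref{meqn}. The one genuinely technical point is verifying the $(S_{+})$-property and the continuity of the mixed operator: the nonlocal term must be controlled, which is exactly where Lemma \ref{locnon1} and the monotone structure of the fractional $p$-Laplacian are used.
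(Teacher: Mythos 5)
Your proposal is correct, and in substance it is the same argument the paper uses: the paper proves Theorem \ref{newthm} by checking the hypotheses of the abstract inverse-iteration theorem of \cite[Theorem 1]{Ercole} --- continuity, boundedness, coercivity and monotonicity of $A$ (Lemma \ref{newlem}), the properties $(H_1)$--$(H_5)$ (Lemma \ref{auxlmab}), uniform convexity of $X$ (Lemma \ref{Xuthm}), and the compact embedding $X\hookrightarrow L^q(\Omega)$ --- and then cites that theorem, whereas you unfold the same iteration explicitly. Your construction (solve $\mathcal{A}(z)=|w_n|^{q-2}w_n$ via Theorem \ref{MB}, normalize, invoke $(p-1)$-homogeneity) is exactly the content of $(H_1)$ and $(H_5)$; your H\"older estimates are $(H_3)$--$(H_4)$; and the convexity chain $\mu_{n+1}\le\mathcal{E}(w_n)\le\mu_n$ is Ercole's monotonicity step. (Incidentally, your index structure --- $w_{n+1}$ on the left of the identity, $w_n$ on the right --- is the intended reading of \eqref{its}; the printed version with $w_n$ on both sides is a slip, as part (b)'s sequences $\{\|w_{n+1}\|_{X}^{p}\}$ and $\{w_{n_j+1}\}$ confirm.) The one genuine divergence is the strong-convergence step: Ercole, and hence the paper, obtains $w_{n_j}\to w$ in $X$ from the uniform convexity of $X$ --- this is precisely why the paper records Lemma \ref{Xuthm} --- while you appeal to the $(S_+)$-property of the concrete mixed operator, noting that $\langle\mathcal{A}(w_{n_j}),w_{n_j}-w\rangle=\mu_{n_j}\int_{\Om}|w_{n_j-1}|^{q-2}w_{n_j-1}(w_{n_j}-w)\,dx\to 0$ by H\"older's inequality and compactness of $X\hookrightarrow L^q(\Omega)$. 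This is a legitimate alternative: the $(S_+)$-property does hold for $\mathcal{A}$, using the algebraic inequality \eqref{algineq} for both the local and the Gagliardo terms (directly when $p\ge 2$, and combined with H\"older's inequality when $1<p<2$). What your route buys is a self-contained proof in which the roles of $q<p^{*}$ (compactness) and of the monotone structure are explicit; what it costs is that the $(S_+)$ verification, which you only sketch and correctly flag as the technical point, is a real if standard piece of work --- roughly the same labor the uniform-convexity argument performs inside \cite{Ercole}. Your endgame, identifying the two subsequential limits $\bar w$ and $w$ through equality in H\"older's inequality after observing $\mathcal{E}(\bar w)=\mu$ and $\|\bar w\|_{L^q(\Om)}=\|w\|_{L^q(\Om)}=1$, likewise matches how the abstract argument closes.
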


\begin{theorem}\label{subopthm1}
Let $0<s<1$, $1<p<\infty$ and $1<q<p^{*}$. Suppose $\{u_n\}_{n\in\mathbb{N}}\subset X\cap L^q(\Omega)$ such that $\|u_n\|_{L^q(\Omega)}=1$ and $\lim_{n\to\infty}\|u_n\|_{X}^{p}=\lambda$.
Then there exists a subsequence $\{u_{n_j}\}_{j\in\mathbb{N}}$ which converges weakly in $X$ to $u\in X\cap L^q(\Omega)$ with $\|u\|_{L^q(\Omega)}=1$ such that  
$$
\lambda=\int_{\Om}|\nabla u|^p\,dx+\int_{\R^N}\int_{\R^N}\frac{|u(x)-u(y)|^{p}}{|x-y|^{N+ps}}\,dx dy.
$$
Moreover, $(\lambda,u)$ is an eigenpair of \eqref{meqn} and any associated eigenfunction of $\lambda$ are precisely the scalar multiple of those vectors at which $\lambda$ is reached.
\end{theorem}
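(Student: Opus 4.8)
The plan is to realize $\lambda$ as a constrained minimum and to combine the direct method of the calculus of variations with the differentiability of the energy. Throughout write
\[
\mathcal{E}(v)=\int_{\Om}|\nabla v|^p\,dx+\int_{\R^N}\int_{\R^N}\frac{|v(x)-v(y)|^{p}}{|x-y|^{N+ps}}\,dx\,dy,
\]
so that $\|v\|_X^p=\mathcal{E}(v)$ and $\lambda=\inf\{\mathcal{E}(v):v\in X\cap L^q(\Om),\ \|v\|_{L^q(\Om)}=1\}$. First I would extract a weakly convergent subsequence. Since $\mathcal{E}(u_n)=\|u_n\|_X^p\to\lambda<\infty$, the sequence $\{u_n\}$ is bounded in $X$; by Lemma \ref{Xuthm} the space $X$ is uniformly convex, hence reflexive, so a subsequence $\{u_{n_j}\}$ converges weakly in $X$ to some $u\in X$. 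Because $1<q<p^*$ and $\Om$ is bounded, and since the $X$-norm dominates $\|\nabla\cdot\|_{L^p(\Om)}$, the sequence is bounded in $W_0^{1,p}(\Om)$ and the compact embedding $W_0^{1,p}(\Om)\hookrightarrow L^q(\Om)$ (Rellich--Kondrachov) gives, along a further subsequence, strong convergence $u_{n_j}\to u$ in $L^q(\Om)$. This forces $\|u\|_{L^q(\Om)}=\lim_j\|u_{n_j}\|_{L^q(\Om)}=1$; in particular $u\neq 0$ and $u\in X\cap L^q(\Om)$. This compactness step, which guarantees that the constraint survives in the limit, is the only genuine obstacle in the argument.

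Next I would identify $u$ as a minimizer. The functional $\mathcal{E}$ is a sum of $p$-th powers of seminorms, hence convex and strongly continuous on $X$, and therefore weakly lower semicontinuous. Consequently $\mathcal{E}(u)\le\liminf_j\mathcal{E}(u_{n_j})=\lambda$, while the definition of $\lambda$ together with $\|u\|_{L^q(\Om)}=1$ gives $\mathcal{E}(u)\ge\lambda$. Hence $\mathcal{E}(u)=\lambda$, which is the claimed identity.

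To show that $(\lambda,u)$ is an eigenpair I would derive the Euler--Lagrange equation. Since $u$ minimizes $\mathcal{E}$ on $\{\|v\|_{L^q(\Om)}=1\}$ and $\mathcal{E}$ is positively $p$-homogeneous, $u$ is a critical point of the Rayleigh quotient $R(v)=\mathcal{E}(v)/\|v\|_{L^q(\Om)}^p$ on $X\setminus\{0\}$. The maps $v\mapsto\int_\Om|\nabla v|^p\,dx$, the nonlocal Gagliardo term, and $v\mapsto\int_\Om|v|^q\,dx$ are Gateaux differentiable, so computing $\frac{d}{dt}R(u+t\phi)|_{t=0}=0$ for $\phi\in X$, cancelling the common factor $p$, and using $\|u\|_{L^q(\Om)}=1$ and $\mathcal{E}(u)=\lambda$ yields
\begin{multline*}
\int_{\Om}|\nabla u|^{p-2}\nabla u\nabla\phi\,dx\\
+\int_{\R^N}\int_{\R^N}\frac{|u(x)-u(y)|^{p-2}(u(x)-u(y))(\phi(x)-\phi(y))}{|x-y|^{N+ps}}\,dx\,dy=\lambda\int_{\Om}|u|^{q-2}u\phi\,dx.
\end{multline*}
As $\|u\|_{L^q(\Om)}^{p-q}=1$, this is precisely \eqref{mwksol}, so $(\lambda,u)$ is an eigenpair.

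Finally, for the characterization of the eigenfunctions of $\lambda$ I would argue by homogeneity in both directions. Under $u\mapsto cu$ with $c\in\R\setminus\{0\}$ both sides of \eqref{mwksol} are multiplied by the same factor $|c|^{p-2}c$ (the left side through $|u|^{p-2}u$, the right side through $\|u\|_{L^q(\Om)}^{p-q}|u|^{q-2}u$, whose combined $u$-degree is again $p-1$), so every scalar multiple of a minimizer solves \eqref{mwksol}. Conversely, if $(\lambda,v)$ is any eigenpair, choosing $\phi=v$ in \eqref{mwksol} gives $\mathcal{E}(v)=\lambda\|v\|_{L^q(\Om)}^{p-q}\int_\Om|v|^q\,dx=\lambda\|v\|_{L^q(\Om)}^p$, that is $R(v)=\lambda$; hence $v/\|v\|_{L^q(\Om)}$ attains the minimum $\lambda$, so $v$ is a scalar multiple of a vector at which $\lambda$ is reached. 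Combining the two inclusions gives the stated description, and the remaining verifications (differentiability and weak lower semicontinuity) are the routine ingredients of the direct method and the Lagrange-multiplier computation.
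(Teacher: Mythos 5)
Your proof is correct, but it takes a genuinely different route from the paper. The paper gives no direct argument at all: it verifies the hypotheses of Ercole's abstract inverse-iteration framework --- uniform convexity of $X$ (Lemma \ref{Xuthm}), compactness of the embedding $X\hookrightarrow L^q(\Om)$, continuity of the operators $A$ and $B$ (Lemma \ref{newlem}), and the structural properties $(H_1)$--$(H_5)$ (Lemma \ref{auxlmab}) --- and then simply quotes \cite[page 583, Proposition 2]{Ercole}, exactly as in the proof of Theorem \ref{newthm}(b). You instead run the direct method of the calculus of variations: reflexivity (via uniform convexity) yields a weak limit, Rellich--Kondrachov yields strong $L^q(\Om)$ convergence so that the constraint $\|u\|_{L^q(\Om)}=1$ survives, convexity plus strong continuity gives weak lower semicontinuity of the energy $\mathcal{E}$, and a Rayleigh-quotient/Lagrange-multiplier computation produces the Euler--Lagrange equation \eqref{mwksol}; the characterization of the eigenfunctions of $\lambda$ then follows from homogeneity in one direction and from testing \eqref{mwksol} with $\phi=v$ in the other. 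Both routes are sound, and your compactness and lower-semicontinuity steps are precisely what Ercole's proposition encapsulates. What each buys: your argument is self-contained and elementary, at the price of needing the Gateaux differentiability of the local term, the Gagliardo term and the $L^q$-norm, which you assert as routine (it is, via dominated convergence, but it is the one ingredient you leave unverified); the paper's route avoids differentiability entirely --- in Ercole's framework the eigenvalue equation is extracted from the equality cases in $(H_3)$--$(H_4)$ and the solvability property $(H_5)$ --- and keeps the proof uniform with the iteration scheme of Theorem \ref{newthm}, at the cost of outsourcing the core argument to the literature. One small point worth making explicit in your converse step: an eigenfunction $v\in X\setminus\{0\}$ automatically satisfies $\|v\|_{L^q(\Om)}>0$, because elements of $X$ vanish a.e.\ outside $\Om$, so $v$ cannot vanish a.e.\ on $\Om$; this is what legitimizes dividing by $\|v\|_{L^q(\Om)}$.
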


Our final main result concerns the following qualitative properties of the eigenfunctions of \eqref{meqn}.
\begin{theorem}\label{regthm}
Let $0<s<1$, $1<p<\infty$ and $1<q<p^{*}$. Assume that $\lambda>0$ is an eigenvalue of the problem \eqref{meqn} and $u\in X\setminus\{0\}$ is a corresponding eigenfunction. Then $(a)$ $u\in L^\infty(\Omega)$. $(b)$ Moreover, if $u$ is nonnegative in $\Omega$, then $u>0$ in $\Omega$. Further, for every $\omega\Subset\Omega$ there exists a positive constant $c$ depending on $\omega$ such that $u\geq c>0$ in $\omega$.
\end{theorem}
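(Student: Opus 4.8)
The plan is to prove (a) by a De Giorgi--Moser iteration adapted to the mixed operator, exploiting that the right-hand side of \eqref{mwksol} is subcritical. Since $u$ is a fixed eigenfunction, the quantity $C:=\lambda\|u\|_{L^q(\Om)}^{p-q}$ is a constant, so the forcing term in \eqref{mwksol} is $C|u|^{q-2}u$ with $1<q<p^*$. First I would fix a level $k>0$, set $w_k:=(u-k)_+$, and insert $\phi=w_k$ (or a suitably truncated power of $u$) into \eqref{mwksol}. The local part produces the Caccioppoli quantity $\int_{\Om}|\nabla w_k|^p\,dx$, while the subcritical right-hand side is controlled on the superlevel set $A_k:=\{u>k\}$ by $C\int_{A_k}|u|^{q-1}w_k\,dx$.

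The key structural point is the sign of the nonlocal bilinear form. Using the elementary monotonicity inequality
$$|a-b|^{p-2}(a-b)\big((a-k)_+-(b-k)_+\big)\ge \big|(a-k)_+-(b-k)_+\big|^p,$$
the nonlocal contribution obtained from the test function is bounded below by the Gagliardo seminorm of $w_k$, hence is nonnegative and may be discarded when deriving the upper estimate. Combining this with Lemma \ref{embd} applied to $w_k$ and H\"older's inequality on $A_k$ yields a recursive inequality for the energies on a decreasing family of levels; because $q<p^*$ guarantees $u\in L^{p^*}(\Om)$ (so the iteration starts from a finite norm), the standard fast-geometric-convergence lemma forces the superlevel sets to be empty beyond a finite level, giving an upper bound. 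Applying the same argument to $-u$ bounds $u$ from below, so $u\in L^\infty(\Om)$.

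For (b) I would first record that, when $u\ge 0$, the eigenfunction equation gives $-\Delta_p u+(-\Delta_p)^s u=C|u|^{q-2}u\ge 0$ in the weak sense, so that $u$ is a nonnegative, bounded weak supersolution of the mixed operator in $\Om$. The positivity and the local lower bound then follow from the weak Harnack inequality for nonnegative supersolutions of the mixed local and nonlocal $p$-Laplacian: on any ball $B_{2r}\Subset\Om$ the Harnack estimate bounds a positive $L^t$-average of $u$ over $B_r$ by $\inf_{B_r}u$ plus a nonlocal tail, and since $u\ge 0$ with $u\not\equiv 0$ the average is positive on some ball, whence $\inf_{B_r}u>0$. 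A chaining argument over a finite cover connects any $\omega\Subset\Om$ to such a ball, producing a uniform constant $c=c(\omega)>0$ with $u\ge c$ in $\omega$; in particular $u>0$ throughout $\Om$.

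The main obstacle I expect is the careful treatment of the nonlocal term in both parts. In (a) one must verify that the long-range interactions produced by truncating $u$ genuinely have the favorable sign and do not generate uncontrolled tail contributions, which is where the monotonicity inequality above and Lemma \ref{locnon1} are essential; in (b) the delicate point is invoking (or reproving) the weak Harnack inequality in the mixed setting with the correct dependence on the nonlocal tail, after which the positivity conclusions are routine.
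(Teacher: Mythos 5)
Your proposal is correct and takes essentially the same approach as the paper: for (a), the paper likewise tests \eqref{mwksol} with $(u-k)^{+}$, discards the nonlocal term by exactly the sign property you isolate (imported there from \cite[estimate (3.9)]{GU22}), and concludes via the Sobolev inequality, H\"older's inequality, and the classical level-set iteration lemma of Ladyzhenskaya--Ural'tseva \cite[Lemma 5.1]{Ural}. For (b), the paper does what you propose in compressed form, citing the weak Harnack/strong minimum principle for nonnegative supersolutions of the mixed operator from \cite[Theorem 8.4]{GK}.
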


\section{Preliminaries}
In this section, we establish some preliminary results that are crucial to prove our main results. To this end, we define the operator $A:X\to X^*$ by
\begin{equation}
\label{a}
\begin{split}
\langle Av,w\rangle=\\ \int_{\Omega}|\nabla v|^{p-2}\nabla v\nabla w\,dx+\int_{\mathbb{R}^N}\int_{\R^N}\frac{|v(x)-v(y)|^{p-2}(v(x)-v(y))(w(x)-w(y))}{|x-y|^{N+ps}}\,dx dy,
\end{split}
\end{equation}
and $B:L^q(\Omega)\to (L^q(\Omega))^*$ by
\begin{equation}\label{b}
\begin{split}
\langle B(v),w\rangle=\int_{\Omega}|v|^{q-2}vw\,dx.
\end{split}
\end{equation}
The symbols $X^*$ and $(L^q(\Omega))^*$ denotes the dual of $X$ and $L^q(\Omega)$ respectively. First, we have the following result.
\begin{lemma}\label{newlem}
$(i)$ The operators $A$ defined by \eqref{a} and $B$ defined by \eqref{b} are continuous. $(ii)$ Moreover, $A$ is bounded, coercive and monotone.
\end{lemma}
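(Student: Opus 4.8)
The plan is to verify the five assertions in turn, treating the local and nonlocal parts of $A$ by a single unified argument. The key structural observation is that the nonlocal term is obtained from a Nemytskii-type expression: writing $V(x,y)=\frac{v(x)-v(y)}{|x-y|^{\frac{N+ps}{p}}}$ and $W(x,y)=\frac{w(x)-w(y)}{|x-y|^{\frac{N+ps}{p}}}$, which lie in $L^p(\mathbb{R}^N\times\mathbb{R}^N)$ with $\|W\|_{L^p(\mathbb{R}^N\times\mathbb{R}^N)}\le\|w\|$, the nonlocal integral equals $\int\int |V|^{p-2}V\,W\,dx\,dy$. This has exactly the same shape as the local integral $\int_{\Omega}|\nabla v|^{p-2}\nabla v\cdot\nabla w\,dx$ with $\nabla v,\nabla w\in L^p(\Omega)$, so both can be estimated and differentiated by identical computations.

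For boundedness I would apply H\"older's inequality with exponents $p'=\frac{p}{p-1}$ and $p$ to each term, obtaining $|\langle Av,w\rangle|\le \|\nabla v\|_{L^p(\Omega)}^{p-1}\|\nabla w\|_{L^p(\Omega)}+\|V\|_{L^p}^{p-1}\|W\|_{L^p}\le C\|v\|^{p-1}\|w\|$, hence $\|Av\|_{X^*}\le C\|v\|^{p-1}$, so $A$ maps bounded sets to bounded sets. For monotonicity I would use the elementary inequality $(|\xi|^{p-2}\xi-|\eta|^{p-2}\eta)\cdot(\xi-\eta)\ge 0$, valid for $\xi,\eta\in\mathbb{R}^N$ and, in scalar form, for $\xi,\eta\in\mathbb{R}$, applied pointwise to the pairs $(\nabla u,\nabla v)$ and $(u(x)-u(y),v(x)-v(y))$; integrating gives $\langle Au-Av,u-v\rangle\ge 0$. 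For coercivity I would compute $\langle Av,v\rangle=\int_{\Omega}|\nabla v|^p\,dx+\int_{\mathbb{R}^N}\int_{\mathbb{R}^N}\frac{|v(x)-v(y)|^p}{|x-y|^{N+ps}}\,dx\,dy$, write this as $a^p+b^p$ where $a$ and $b$ are the two seminorms whose sum is $\|v\|$, and use the convexity estimate $a^p+b^p\ge 2^{1-p}(a+b)^p$ to get $\langle Av,v\rangle\ge 2^{1-p}\|v\|^p$; thus $\langle Av,v\rangle/\|v\|\ge 2^{1-p}\|v\|^{p-1}\to\infty$ as $\|v\|\to\infty$, since $p>1$.

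The main obstacle is the strong continuity of $A$, which I expect to require the most care. Given $v_n\to v$ in $X$, I would bound $\|Av_n-Av\|_{X^*}=\sup_{\|w\|\le 1}|\langle Av_n-Av,w\rangle|$ by H\"older, reducing matters to showing $\||\nabla v_n|^{p-2}\nabla v_n-|\nabla v|^{p-2}\nabla v\|_{L^{p'}(\Omega)}\to 0$ together with the analogous statement for $|V_n|^{p-2}V_n$ in $L^{p'}(\mathbb{R}^N\times\mathbb{R}^N)$, where $V_n(x,y)=\frac{v_n(x)-v_n(y)}{|x-y|^{\frac{N+ps}{p}}}$. Since $v_n\to v$ in $X$ forces $\nabla v_n\to\nabla v$ in $L^p(\Omega)$ and $V_n\to V$ in $L^p$, this is exactly the continuity of the Nemytskii map $\xi\mapsto|\xi|^{p-2}\xi$ from $L^p$ into $L^{p'}$. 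I would prove it by the subsequence principle: from any subsequence extract a further one converging a.e.\ and dominated by a fixed $L^p$ function (using the standard fact that an $L^p$-convergent sequence admits an a.e.-convergent subsequence with an $L^p$ majorant), then apply continuity of $\xi\mapsto|\xi|^{p-2}\xi$ pointwise together with the bound $\big||\xi|^{p-2}\xi\big|\le|\xi|^{p-1}$ and dominated convergence in $L^{p'}$; as every subsequence has a further subsequence converging to the same limit, the whole sequence converges. The continuity of $B$ follows from the identical argument with $(p,p')$ replaced by $(q,q')$, using that $v_n\to v$ in $L^q(\Omega)$ implies $|v_n|^{q-2}v_n\to|v|^{q-2}v$ in $L^{q'}(\Omega)$.
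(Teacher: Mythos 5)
Your proof is correct, and on the central point --- continuity --- it takes a genuinely different, and in fact stronger, route than the paper. The paper's argument is: from $v_n\to v$ in $X$, extract a subsequence whose gradients and difference quotients converge a.e., use the uniform bounds $\||\nabla v_n|^{p-2}\nabla v_n\|_{L^{p'}(\Omega)}\le c$ (and its nonlocal analogue) to get \emph{weak} $L^{p'}$-convergence of these nonlinear quantities to the expected limits, and conclude by uniqueness of weak limits that $\langle Av_n,w\rangle\to\langle Av,w\rangle$ for each fixed $w\in X$. That establishes demicontinuity (strong-to-weak continuity), which suffices for standard versions of the Browder--Minty theorem invoked in Theorem \ref{MB}, but it is weaker than norm continuity of $A:X\to X^*$. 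You instead prove $\|Av_n-Av\|_{X^*}\to 0$ by reducing, via H\"older, to the continuity of the Nemytskii map $\xi\mapsto|\xi|^{p-2}\xi$ from $L^p$ into $L^{p'}$, proved by the subsequence principle with an a.e.-convergent subsequence, an $L^p$ majorant, and dominated convergence; this yields the literal operator-norm continuity asserted in the statement, at the modest cost of the standard majorant lemma. A second, smaller difference is worth recording: since $\|\cdot\|_X$ is the \emph{sum} $a+b$ of the local and nonlocal seminorms, one has $\langle Av,v\rangle=a^p+b^p$, which is not equal to $(a+b)^p=\|v\|_X^p$; your convexity step $a^p+b^p\ge 2^{1-p}(a+b)^p$ (and the harmless constant $C$ in the boundedness estimate) handles this correctly, whereas the chain \eqref{mest} in the paper uses the inequality $a^{p-1}+b^{p-1}\le(a^p+b^p)^{(p-1)/p}$ and the identity $\langle Av,v\rangle=\|v\|_X^p$, both of which fail (test $a=b=1$); boundedness and coercivity survive in either case, but your constants are the honest ones. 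Monotonicity is essentially identical in the two proofs: the paper quotes a quantitative inequality of Damascelli but uses only its nonnegativity, exactly as you do.
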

\begin{proof}
\noindent
$(i)$ \textbf{Continuity:} Suppose $v_n\in X$ such that $v_n\to v$ in the norm of $X$. Thus, up to a subsequence $\nabla v_{n}\to \nabla v$ in $\Om$. We observe that 
\begin{equation}\label{mfd}
\||\nabla v_{n}|^{p-2}\nabla v_{n}\|_{L^\frac{p}{p-1}(\Om)}\leq \|\nabla v_{n}\|^{p-1}_{L^p(\Omega)}\leq c, 
\end{equation}
for some constant $c>0$, which is independent of $n$. Thus, up to a subsequence, we have
\begin{equation}\label{fc}
|\nabla v_{n}|^{p-2}\nabla v_{n}\to |\nabla v|^{p-2}\nabla v\text{ weakly in }L^{p'}(\Om).
\end{equation}
Moreover, up to a subsequence, we have
\begin{equation}\label{mffc}
\frac{|v_n(x)-v_n(y)|^{p-2}(v_n(x)-v_n(y))}{|x-y|^\frac{N+ps}{p'}}\to\frac{|v(x)-v(y)|^{p-2}(v(x)-v(y))}{|x-y|^\frac{N+ps}{p'}} 
\end{equation}
weakly in $L^{p'}(\R^{2N})$. Since, the weak limit is independent of the choice of the subsequence, as a consequence of \eqref{fc} and \eqref{mffc}, we have 
$$
\lim_{n\to\infty}\langle Av_n,w\rangle=\langle Av,w\rangle
$$
for every $w\in X$. Thus $A$ is continuous. Similarly, we obtain $B$ is continuous.
\vskip 0.2cm
\noindent
$(ii)$ \textbf{Boundedness:}
Using Cauchy-Schwartz and H\"older's inequality, we observe that
\begin{equation}\label{mest}
\begin{split}
\langle Av,w\rangle&=\int_{\Om}|\nabla v|^{p-2}\nabla v\nabla w\,dx\\
&\quad\quad+\int_{\R^N}\int_{\R^N}\frac{|v(x)-v(y)|^{p-2}(v(x)-v(y))(w(x)-w(y))}{|x-y|^{N+ps}}\,dx dy\\
&\leq\int_{\Om}|\nabla v|^{p-1}|\nabla w|\,dx+\int_{\R^N}\int_{\R^N}\frac{|v(x)-v(y)|^{p-1}|w(x)-w(y)|}{|x-y|^{N+ps}}\,dx dy\\
&\leq\Big(\int_{\Om}|\nabla v|^p\,dx\Big)^\frac{p-1}{p}\Big(\int_{\Om}|\nabla w|^p\,dx\Big)^\frac{1}{p}\\
&\quad\quad+\Big(\int_{\R^N}\int_{\R^N}\frac{|v(x)-v(y)|^p}{|x-y|^{N+ps}}\,dx dy\Big)^\frac{p-1}{p}\Big(\int_{\R^N}\int_{\R^N}\frac{|w(x)-w(y)|^p}{|x-y|^{N+ps}}\,dx dy\Big)^\frac{1}{p}\\
&\leq\Bigg[\Big(\int_{\Om}|\nabla v|^p\,dx\Big)^\frac{p-1}{p}+\Big(\int_{\R^N}\int_{\R^N}\frac{|v(x)-v(y)|^p}{|x-y|^{N+ps}}\,dx dy\Big)^\frac{p-1}{p}\Bigg]\|w\|_{X}\\
&\leq \Big(\int_{\Om}|\nabla v|^p\,dx+\int_{\R^N}\int_{\R^N}\frac{|v(x)-v(y)|^p}{|x-y|^{N+ps}}\,dx dy\Big)^\frac{p-1}{p}\|w\|_{X}=\|v\|_{X}^{p-1}\|w\|_{X},
\end{split}
\end{equation}
Therefore, we have
$$
\|Av\|_{X^*}=\sup_{\|w\|_{X}\leq 1}|\langle Av,w\rangle|\leq\|v\|_{X}^{p-1}\|w\|_{X}\leq\|v\|^{p-1}_{X}.
$$
Thus, $A$ is bounded.

\noindent
\textbf{Coercivity:}  We observe that 
$$
\langle Av,v\rangle=\int_{\Om}|\nabla v|^p\,dx+\int_{\R^N}\int_{\R^N}\frac{|v(x)-v(y)|^p}{|x-y|^{N+ps}}\,dx dy=\|v\|_{X}^p.
$$
Since $p>1$, we have $A$ is coercive.\\

\noindent
\textbf{Monotonicity:} For $u\in X$, let us denote by
$$
\mathcal{A}(u(x,y))=|u(x)-u(y)|^{p-2}(u(x)-u(y)),\quad d\mu=\frac{dx dy}{|x-y|^{N+ps}}.
$$
Recall the following algebraic inequality from \cite[Lemma $2.1$]{Dama}: there exists a constant $C=C(p)>0$ such that
\begin{equation}\label{algineq}
\langle |a|^{p-2}a-|b|^{p-2}b, a-b \rangle\geq C(p)(|a|+|b|)^{p-2}|a-b|^2, \,\,1<p<\infty,
\end{equation}
for any $a,b\in\mathbb{R}^N$.

Thus, for every $v,w\in X$, we have
\begin{equation*}
\begin{split}
&\langle Av-Aw,v-w\rangle\\
&=\int_{\Om}\langle|\nabla v|^{p-2}\nabla v-|\nabla w|^{p-2}\nabla w,\nabla (v-w)\rangle\,dx\\
&\quad+\int_{\R^N}\int_{\R^N}\Big(\mathcal{A}(v(x,y))-\mathcal{A}(w(x,y))\Big)((v(x)-w(x))-(v(y)-w(y)))\,d\mu\\
&\geq 0.
\end{split}
\end{equation*}
Thus, $A$ is monotone.\\
\end{proof}

\begin{lemma}\label{auxlmab}
The operators $A$ defined by \eqref{a} and $B$ defined by \eqref{b} satisfy the following properties: 
\vskip 0.2cm

\noindent
$(H_1)$ $A(tv)=|t|^{p-2}tA(v)\quad\forall t\in\mathbb{R}\quad \text{and}\quad\forall v\in X$.
\vskip 0.2cm

\noindent
$(H_2)$ $B(tv)=|t|^{q-2}tB(v)\quad\forall t\in\mathbb{R}\quad \text{and}\quad\forall v\in L^q(\Omega)$.
\vskip 0.2cm

\noindent
$(H_3)$ $\langle A(v),w\rangle\leq\|v\|_{X}^{p-1}\|w\|_{X}$ for all $v,w\in X$, where the equality holds if and only if $v=0$ or $w=0$ or $v=t w$ for some $t>0$.
\vskip 0.2cm

\noindent
$(H_4)$ $\langle B(v),w\rangle\leq\|v\|_{L^q(\Omega)}^{q-1}\|w\|_{L^q(\Omega)}$ for all $v,w\in {L^q(\Omega)}$, where the equality holds if and only if $v=0$ or $w=0$ or $v=t w$ for some $t\geq 0$.
\vskip 0.2cm

\noindent
$(H_5)$ For every $w\in L^q(\Omega)\setminus\{0\}$ there exists $u\in X\setminus\{0\}$ such that
$$
\langle A(u),v\rangle=\langle B(w),v\rangle\quad\forall\quad v\in X.
$$
\end{lemma}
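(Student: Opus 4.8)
The plan is to establish the five properties in the listed order, since the homogeneity $(H_1)$ feeds into the equality analysis of $(H_3)$, and $(H_5)$ combines all the structural facts already recorded for $A$ with the surjectivity theorem. Properties $(H_1)$ and $(H_2)$ follow from the pointwise identity $|tz|^{r-2}(tz)=|t|^{r-2}t\,|z|^{r-2}z$, valid for every $t\in\mathbb{R}$ and every scalar or vector $z$. Applying it with $r=p$ to $\nabla(tv)=t\nabla v$ and to $tv(x)-tv(y)=t(v(x)-v(y))$ extracts the common factor $|t|^{p-2}t$ from both integrals defining $\langle A(tv),w\rangle$, giving $(H_1)$; the same identity with $r=q$ inside the integral defining $B$ gives $(H_2)$.

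For $(H_3)$ the inequality is precisely the chain already carried out in \eqref{mest}, so the only genuine work is the equality statement. Sufficiency is immediate: the cases $v=0$ or $w=0$ are trivial, and if $v=tw$ with $t>0$ then $(H_1)$ and the identity $\langle A(w),w\rangle=\|w\|_X^p$ from Lemma \ref{newlem} give $\langle A(tw),w\rangle=t^{p-1}\|w\|_X^p=\|tw\|_X^{p-1}\|w\|_X$. For necessity I would assume $v,w\neq0$ and trace equality back through the three inequalities of \eqref{mest}. Equality in the pointwise Cauchy--Schwarz and sign step forces $\nabla w$ to be a nonnegative multiple of $\nabla v$ and $w(x)-w(y)$ to share the sign of $v(x)-v(y)$; equality in the two H\"older steps forces $|\nabla w|=c_1|\nabla v|$ a.e.\ and $|w(x)-w(y)|=c_2|v(x)-v(y)|$ for the measure $d\mu$, with constants $c_1,c_2\geq0$; and equality in the final discrete (Minkowski-type) H\"older step forces proportionality of the local and nonlocal energies.

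The step I expect to be the crux is gluing the a priori independent constants $c_1,c_2$ into one. Writing $\alpha,\beta$ for the local and nonlocal energies of $v$ and $\gamma,\delta$ for those of $w$, the H\"older equalities give $\gamma=c_1^p\alpha$ and $\delta=c_2^p\beta$, while the discrete-H\"older equality gives $\alpha\delta=\beta\gamma$. Since $v\neq0$ and $v$ vanishes outside $\Omega$ we have $\alpha>0$ and $\beta>0$, and likewise $w\neq0$ gives $\gamma,\delta>0$, so these relations force $c_1=c_2=:t>0$. Then $\nabla(w-tv)=0$ a.e.\ in $\Omega$ with $w-tv\in W_0^{1,p}(\Omega)$ yields $w=tv$, which is exactly the asserted form $v=t^{-1}w$ with $t^{-1}>0$.

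Property $(H_4)$ runs the same scheme inside the single integral $\int_\Omega|v|^{q-2}vw\,dx$: the pointwise bound $|v|^{q-2}vw\leq|v|^{q-1}|w|$ and H\"older with exponents $q'$ and $q$ give the inequality, with equality requiring $vw\geq0$ a.e.\ and $|w|=c|v|$ a.e.; as there is no gradient constraint to reconcile, the argument is shorter and produces $v=tw$ for some $t\geq0$, the value $t=0$ matching the degenerate branch $v=0$ and being forced by the sign bookkeeping $|t|^{q-2}t=|t|^{q-1}$. Finally, for $(H_5)$, given $w\in L^q(\Omega)\setminus\{0\}$ the continuous embedding $X\hookrightarrow L^q(\Omega)$ from Lemma \ref{embd} (valid since $q<p^*$) shows $v\mapsto\langle B(w),v\rangle$ is bounded on $X$, hence an element of $X^*$. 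As $X$ is separable and uniformly convex, hence reflexive, by Lemma \ref{Xuthm}, and $A$ is bounded, continuous, coercive and monotone by Lemma \ref{newlem}, Theorem \ref{MB} gives $u\in X$ with $A(u)=B(w)$ in $X^*$, i.e.\ $\langle A(u),v\rangle=\langle B(w),v\rangle$ for all $v\in X$. That $u\neq0$ follows since otherwise $\langle B(w),v\rangle=0$ for all $v\in X$, contradicting the existence of $v\in C_c^\infty(\Omega)\subset X$ with $\int_\Omega|w|^{q-2}wv\,dx\neq0$ (such $v$ exists because $|w|^{q-2}w\neq0$ in $L^{q'}(\Omega)$ and $C_c^\infty(\Omega)$ is dense in $L^q(\Omega)$).
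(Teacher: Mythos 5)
Your proof is correct and follows the paper's overall strategy: $(H_1)$--$(H_2)$ by homogeneity of the integrands, $(H_3)$--$(H_4)$ by tracing equality back through the chain \eqref{mest}, and $(H_5)$ via the embedding $X\hookrightarrow L^q(\Omega)$ together with Theorem \ref{MB}. The one substantive difference is in the equality analysis of $(H_3)$: the step you call the crux --- gluing the two H\"older constants $c_1,c_2$ through the equality condition $\alpha\delta=\beta\gamma$ in the discrete H\"older step --- is actually dispensable. As the paper's proof shows, the local data alone suffice: pointwise Cauchy--Schwarz equality gives $\nabla v=c(x)\nabla w$ with $c(x)\geq 0$, the H\"older equality for the gradient term gives $|\nabla v|=d|\nabla w|$ for a constant $d>0$, hence $c(x)=d$, and $v-dw\in W_0^{1,p}(\Omega)$ with vanishing gradient forces $v=dw$, with no reconciliation against the nonlocal constant. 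Your longer route is nonetheless valid (the positivity $\alpha,\beta,\gamma,\delta>0$ you invoke holds by Poincar\'e and by nontriviality of the Gagliardo seminorm for nonzero functions vanishing outside $\Omega$), and it has the incidental merit of running the last step as the genuine two-term H\"older bound $\alpha^{\frac{p-1}{p}}\gamma^{\frac{1}{p}}+\beta^{\frac{p-1}{p}}\delta^{\frac{1}{p}}\leq(\alpha+\beta)^{\frac{p-1}{p}}(\gamma+\delta)^{\frac{1}{p}}$, whereas \eqref{mest} as printed passes through $\alpha^{\frac{p-1}{p}}+\beta^{\frac{p-1}{p}}\leq(\alpha+\beta)^{\frac{p-1}{p}}$, which is false for exponents in $(0,1)$; your version silently repairs this. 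You also supply the nondegeneracy argument $u\neq 0$ in $(H_5)$ (via $B(w)\neq 0$ in $X^*$, tested against $C_c^{\infty}(\Omega)$), which the paper asserts without proof; everything else matches.
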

\begin{proof}
\vskip 0.2cm

\noindent
$(H_1)$ Follows by the definition of $A$.
\vskip 0.2cm

\noindent
$(H_2)$  Follows by the definition of $B$.
\vskip 0.2cm

\noindent
$(H_3)$ First, we note that from \eqref{mest} the inequality
$\langle A(v),w\rangle\leq\|v\|_{X}^{p-1}\|w\|_{X}$ holds for all $v,w\in X$. Let the equality
\begin{equation}\label{mequal}
\langle A(v),w\rangle=\|v\|_{X}^{p-1}\|w\|_{X}
\end{equation}
holds for every $v,w\in X$. We claim that either $v=0$ or $w=0$ or $v=tw$ for some constant $t>0$. Indeed, if $v=0$ or $w=0$, then \eqref{mequal} is true. Therefore, we assume that both $v$ and $w$ are not identically zero in $\Om$ and prove that $v=dw$ for some constant $d>0$. By the estimate \eqref{mest} if the equality \eqref{mequal} holds, then we have
\begin{equation}\label{mequal1}
\langle A(v), w\rangle=\int_{\Om}|\nabla v|^{p-1}|\nabla w|\,dx+\int_{\R^N}\int_{\R^N}\frac{|v(x)-v(y)|^{p-1}|w(x)-w(y)|}{|x-y|^{N+ps}}\,dx dy,
\end{equation}
which gives us
\begin{equation}\label{mCS}
\int_{\Om}f(x)\,dx+\int_{\R^N}\int_{\R^N}\frac{g(x,y)\,dx dy}{|x-y|^{N+ps}}=0,
\end{equation}
where 
$$
f(x)=|\nabla v|^{p-1}|\nabla w|-|\nabla v|^{p-2}\nabla v\nabla w
$$
and
$$
g(x,y)=|v(x)-v(y)|^{p-1}|w(x)-w(y)|-|v(x)-v(y)|^{p-2}(v(x)-v(y))w(x)-w(y).
$$
By Cauchy-Schwartz inequality, we have $f\geq 0$ in $\Om$ and $g\geq 0$ in $\R^N\times\R^N$. Hence using these facts in \eqref{mCS}, we have $f=0$ in $\Om$, which reduces to
\begin{equation}\label{mfCS}
|\nabla v|^{p-1}|\nabla w|=|\nabla v|^{p-2}\nabla v\nabla w\text{ in }\Om.
\end{equation}
Hence $\nabla v(x)=c(x)\nabla w(x)$ for some $c(x)\geq 0$.
On the other hand, if the equality \eqref{mequal} holds, then by the estimate \eqref{mest} we have
\begin{equation}\label{mequal2}
\begin{split}
f_1-f_2=g_2-g_1,
\end{split}
\end{equation}
where
$$
f_1=\int_{\Om}|\nabla v|^{p-1}|\nabla w|\,dx,\quad f_2=\Big(\int_{\Om}|\nabla v|^p\,dx\Big)^\frac{p-1}{p}\Big(\int_{\Om}|\nabla w|^p\,dx\Big)^\frac{1}{p},
$$
$$
g_1=\int_{\R^N}\int_{\R^N}\frac{|v(x)-v(y)|^{p-2}(v(x)-v(y))(w(x)-w(y))}{|x-y|^{N+ps}}\,dx dy
$$
and
$$
g_2=\Big(\int_{\R^N}\int_{\R^N}\frac{|v(x)-v(y)|^p}{|x-y|^{N+ps}}\,dx dy\Big)^\frac{p-1}{p}\Big(\int_{\R^N}\int_{\R^N}\frac{|w(x)-w(y)|^p}{|x-y|^{N+ps}}\,dx dy\Big)^\frac{1}{p}.
$$
By the H\"older's inequality, we know that $f_1-f_2\leq 0$ and $g_2-g_1\geq 0$. Therefore, we obtain from \eqref{mequal2} that
$$
f_1=f_2\text{ and }g_1=g_2.
$$
Since $f_1=f_2$, the equality in H\"older's inequality holds, which gives
\begin{equation}\label{mCSeq2}
|\nabla v|=d|\nabla w|\text{ in }\Om,
\end{equation}
for some constant $d>0$. Therefore, $c(x)=d$. Therefore, $v=d\,w$ a.e. in $\Om$ for some constant $d>0$. Hence, the property $(H3)$ is verified.

\vskip 0.2cm

\noindent
$(H_4)$ This property can be verified similarly as in $(H_3)$.
\vskip 0.2cm

\noindent
$(H_5)$ Note that by Lemma \ref{Xuthm}, it follows that $X$ is a separable and reflexive Banach space. By Lemma \ref{newlem}, the operator $A:X\to X^*$ is bounded, continuous, coercive and monotone.

By the Sobolev embedding theorem, we have $X$ is continuously embedded in $L^q(\Omega)$. Therefore, $B(w)\in X^*$ for every $w\in L^q(\Omega)\setminus\{0\}$.

Hence, by Theorem \ref{MB}, for every $w\in L^q(\Omega)\setminus\{0\}$, there exists $u\in X\setminus\{0\}$ such that
$$
\langle A(u),v\rangle=\langle B(w),v\rangle\quad\forall v\in X.
$$
Hence the property $(H_5)$ holds. This completes the proof.
\end{proof}

\section{Proof of the main results:}
\vskip 0.2cm
\noindent
\textbf{Proof of Theorem \ref{newthm}:}
\vskip 0.2cm
\noindent
$(a)$ First we recall the definition of the operators $A:X\to X^*$ from \eqref{a} and $B:L^q(\Omega)\to (L^{q}(\Omega))^*$ from \eqref{b} respectively. Then, noting the property $(H_5)$ from Lemma \ref{auxlmab} and proceeding along the lines of the proof in \cite[page $579$ and pages $584-585$]{Ercole}, the result follows.
\vskip 0.2cm
\noindent
$(b)$ Note that by Lemma \ref{Xuthm}, $X$ is uniformly convex Banach space and by the Sobolev embedding theorem, $X$ is compactly embedded in $L^q(\Omega)$. Next, using Lemma \ref{newlem}-$(i)$, the operators $A:X\to X^*$ and $B:L^q(\Omega)\to (L^q(\Omega))^*$ are continuous and by Lemma \ref{auxlmab}, the properties $(H_1)-(H_5)$ holds. Noting these facts, the result follows from \cite[page $579$, Theorem 1]{Ercole}. \qed
\vskip 0.2cm
\noindent
\textbf{Proof of Theorem \ref{subopthm1}:} The proof follows due to the same reasoning as in the proof of Theorem \ref{newthm}-$(b)$ except that here we apply \cite[page $583$, Proposition $2$]{Ercole} in place of \cite[page $579$, Theorem 1]{Ercole}. 
\vskip 0.2cm
\noindent
\textbf{Proof of Theorem \ref{regthm}:}
\vskip 0.2cm
\noindent
$(a)$ Due to the homogeneity of the equation \eqref{meqn}, without loss of generality, we assume that $\|u\|_{L^q(\Omega)}=1$. Let $k\geq 1$ and set $L(k):=\{x\in\Omega:u(x)>k\}$. Choosing $v=(u-k)^+$ as a test function in \eqref{mwksol}, we obtain
\begin{equation}\label{regtst1}
\begin{split}
&\int_{L(k)}|\nabla u|^p\,dx+\int_{\mathbb{R}^N}\int_{\mathbb{R}^N}\frac{|u(x)-u(y)|^{p-2}(u(x)-u(y))(v(x)-v(y))}{|x-y|^{N+ps}}\,dx dy\,\\
&=\lambda\int_{L(k)}u^{q-1}(u-k)\,dx\leq\lambda\int_{L(k)}u^{q-1}(u-k)\,dx.
\end{split}
\end{equation}
From the estimate $(3.9)$ on page 10 in \cite{GU22}, it follows that the second integral in the left hand side of \eqref{regtst1} is nonnegative. Therefore, we have
\begin{equation}\label{regtst2}
\int_{L(k)}|\nabla u|^p\,dx\leq\lambda\int_{L(k)}u^{q-1}(u-k)\,dx\leq\lambda\int_{L(k)}u^{q-1}(u-k)\,dx.
\end{equation}
The rest of the proof follows by arguing along the lines of the proof of \cite[Theorem 3.4, Pages 7--8]{GUsub}. For convenience of the reader, we prove it here.\\
\textbf{Case $I$.} Let $q\leq p$, then since $k\geq 1$, over the set $L(k)$, we have $u^{q-1}\leq u^{p-1}$. Therefore, from \eqref{regtst1} we have
\begin{equation}\label{regtst22}
\begin{split}
\int_{L(k)}|\nabla u|^p\,dx&\leq\lambda\int_{L(k)}|u|^{p-1}(u-k)\,dx\\
&\leq\lambda\int_{L(k)}(2^{p-1}(u-k)^{p}+2^{p-1}k^{p-1}(u-k))\,dx,
\end{split}
\end{equation}
where to obtain the last inequality above, we have used the inequality $(a+b)^{p-1}\leq 2^{p-1}(a^{p-1}+b^{p-1})$ for $a,b\geq 0$. Using the Sobolev inequality \eqref{embeqn} with $r=p$ in \eqref{regtst2} we obtain
\begin{equation}\label{regtst3}
\begin{split}
(1-S\lambda 2^{p-1}|L(k)|^\frac{p}{\nu})\int_{L(k)}(u-k)^p\,dx&\leq\lambda S 2^{p-1}k^{p-1}|L(k)|^\frac{p}{\nu}\int_{L(k)}(u-k)\,dx,
\end{split}
\end{equation}
where $S>0$ is the Sobolev constant. Note that $\|u\|_{L^1(\Omega)}\geq k|L(k)|$ and therefore for every $k\geq k_0=(2^p S\lambda)^\frac{\nu}{p}\|u\|_{L^1(\Omega)}$, we have $S\lambda2^{p-1}|L(k)|^\frac{p}{\nu}\leq\frac{1}{2}$. Using this fact in \eqref{regtst3}, for every
$k\geq\max\{k_0,1\}$, we get
\begin{equation}\label{regtst4}
\begin{split}
\int_{L(k)}(u-k)^p\,dx&\leq \lambda S 2^{p}k^{p-1}|L(k)|^\frac{p}{\nu}\int_{L(k)}(u-k)\,dx.
\end{split}
\end{equation}
Using H\"older's inequality and the estimate \eqref{regtst4} we obtain
\begin{equation}\label{regtst5}
\int_{L(k)}(u-k)\,dx\leq (\lambda S2^p)^\frac{1}{p-1}k|L(k)|^{1+\frac{p}{\nu(p-1)}}.
\end{equation}
Noting \eqref{regtst5}, by \cite[Lemma $5.1$]{Ural}, we get $u\in L^\infty(\Omega)$. \\
\textbf{Case $II.$} Let $q>p$, then using the inequality $(a+b)^{q-1}\leq 2^{q-1}(a^{q-1}+b^{q-1})$ for $a,b\geq 0$ in \eqref{regtst1} we get
\begin{equation}\label{regtstc2}
\int_{L(k)}|\nabla u|^p\,dx\leq\lambda\int_{L(k)}(2^{q-1}(u-k)^{q}+2^{q-1}k^{q-1}(u-k))\,dx.
\end{equation}
Now, using the Sobolev inequality \eqref{embeqn} with $r=q$ in the estimate \eqref{regtstc2} we obtain
\begin{equation}\label{regtstc21}
\begin{split}
\left(\int_{L(k)}(u-k)^q\,dx\right)^\frac{p}{q}&\leq S\lambda|L(k)|^{p(\frac{1}{q}-\frac{1}{p}+\frac{1}{\nu})}\int_{L(k)}(2^{q-1}(u-k)^q+2^{q-1}k^{q-1}(u-k))\,dx,
\end{split}
\end{equation}
where $S>0$ is the Sobolev constant. Since $\int_{L(k)}(u-k)^q\,dx\leq\|u\|^{q}_{L^q(\Omega)}=1$ and $q>p$, the quantity in the left side of \eqref{regtstc21} can be estimated from below as

\begin{equation}\label{regtstc22}
\left(\int_{L(k)}(u-k)^q\,dx\right)^\frac{p}{q}=\left(\int_{L(k)}(u-k)^q\,dx\right)^{\frac{p-q}{q}+1}\geq\int_{L(k)}(u-k)^q\,dx.
\end{equation}
Using \eqref{regtstc22} in \eqref{regtstc21} we get
\begin{equation}\label{regtstc23}
\begin{split}
&\Big(1-S\lambda 2^{q-1}|L(k)|^{p(\frac{1}{q}-\frac{1}{p}+\frac{1}{\nu})}\Big)\int_{L(k)}(u-k)^q\,dx\\
&\leq S\lambda2^{q-1}k^{q-1}|L(k)|^{p(\frac{1}{q}-\frac{1}{p}+\frac{1}{\nu})}\int_{L(k)}(u-k)\,dx.
\end{split}
\end{equation}
Let $\alpha={p(\frac{1}{q}-\frac{1}{p}+\frac{1}{\nu})}$, which is positive, since $1<q<p^{*}$. Choosing $k_1=(S\lambda2^q)^\frac{1}{\alpha}\|u\|_{L^1(\Omega)}$, due to the fact that $k|L(k)|\leq\|u\|_{L^1(\Omega)}$, we obtain for every $k\geq k_1$ that $S\lambda 2^{q-1}|L(k)|^\alpha\leq\frac{1}{2}$. Using this property in \eqref{regtstc23}, we have

\begin{equation}\label{regtstc24}
\begin{split}
\int_{L(k)}(u-k)^q\,dx&\leq S\lambda2^{q}k^{q-1}|L(k)|^\alpha\int_{L(k)}(u-k)\,dx.
\end{split}
\end{equation}
By H\"older's inequality and the estimate \eqref{regtstc24} we arrive at
\begin{equation}\label{regtstc25}
\int_{L(k)}(u-k)\,dx\leq (\lambda S2^q)^\frac{1}{q-1}k|L(k)|^{1+\frac{\alpha}{q-1}}.
\end{equation}
Noting \eqref{regtstc25}, by \cite[Lemma $5.1$]{Ural}, we get $u\in L^\infty(\Omega)$.
\vskip 0.2cm
\noindent
$(b)$ By \cite[Theorem $8.4$]{GK}, the result follows. \qed

\vskip 0.3cm

\vskip 0.6cm

\noindent {{Prashanta Garain\\
Department of Mathematical Sciences,\\
Indian Institute of Science Education and Research Berhampur,\\
Berhampur, Odisha 760010, India\\
Department of Mathematics,\\
Indian Institute of Technology Indore,\\
Khandwa Road, Simrol, Indore 453552, India,\\ 
\textsf{e-mail}: pgarain92@gmail.com\\

\vskip 0.3cm

\noindent {Alexander Ukhlov\\
Department of Mathematics,\\
Ben-Gurion University of the Negev,\\ P.O.Box 653, Beer Sheva, 8410501, Israel\\
\textsf{e-mail}: ukhlov@math.bgu.ac.il\\

\end{document}